\newtheorem{guia}{}
\newtheorem{teorema}[guia]{Theorem}
\newtheorem{lema}[guia]{Lemma}
\newcommand{\al}{\alpha}\newcommand{\be}{\beta}
\newcommand{\De}{\Delta}
\newcommand{\la}{\lambda}
\newcommand{\Om}{\Omega}
\newcommand{\Omb}{\overline{\Omega}}
\newcommand{\p}{\partial}
\newcommand{\R}{\mathbb R}
\newcommand{\ds}{\displaystyle}
\begin{document}

\title[Nonexistence for H\'enon equation]
{\bf Nonexistence of positive solutions for\\ H\'enon equation}

\author[Jorge Garc\'{\i}a-Meli\'{a}n]{Jorge Garc\'{\i}a-Meli\'{a}n}

\date{}

\address{J. Garc\'{\i}a-Meli\'{a}n \hfill\break\indent
Departamento de An\'{a}lisis Matem\'{a}tico, Universidad de La
Laguna \hfill \break \indent C/. Astrof\'{\i}sico Francisco
S\'{a}nchez s/n, 38200 -- La Laguna, SPAIN\hfill\break\indent
{\rm and} \hfill\break
\indent Instituto Universitario de Estudios Avanzados (IUdEA) en F\'{\i}sica
At\'omica,\hfill\break\indent Molecular y Fot\'onica,
Universidad de La Laguna\hfill\break\indent C/. Astrof\'{\i}sico Francisco
S\'{a}nchez s/n, 38200 -- La Laguna, SPAIN} \email{{\tt
jjgarmel@ull.es}}


\begin{abstract}
We consider the semilinear elliptic equation
$$
-\De u = |x|^\al u^p \quad \hbox{in } \R^N,
$$
where $N\ge 3$, $\al>-2$ and $p>1$. We show that there are no 
positive solutions provided that the exponent $p$ additionally verifies
$$
1<p<\frac{N+2\al+2}{N-2}.
$$
This solves an open problem posed in previous literature, where only the radially 
symmetric case was fully understood. We also characterize all positive solutions 
when $p=\frac{N+2\al+2}{N-2}$ and $-2<\al<0$.
\end{abstract}

\maketitle


\section{Introduction}
\setcounter{section}{1}
\setcounter{equation}{0}

Probably the most well-known nonlinear Liouville theorem in the literature is the one obtained in the celebrated 
paper \cite{GS}. There, nonexistence of positive solutions of the elliptic equation
\begin{equation}\label{eq-gs}
-\De u = u^p \quad \hbox{in } \R^N
\end{equation}
is established, provided that $N\ge 3$ and the exponent $p$ verifies the `subcriticality' condition
\begin{equation}\label{subcritico-gs}
1<p<\frac{N+2}{N-2}.
\end{equation}
This nonexistence theorem can be complemented with a classification result when $p=\frac{N+2}{N-2}$. 
It was proved in \cite{CGS} that every positive solution of \eqref{eq-gs} for this value of $p$ is of the form 
\begin{equation}\label{bubble-gs}
u(x)= (N(N-2)) ^\frac{N-2}{4} \hspace{-1mm} \left(\frac{\mu}{\mu^2+ |x-x_0|^{2}}\right)^\frac{N-2}{2},
\end{equation}
for some $x_0\in \R^N$ and $\mu>0$. See also \cite{LZ} for a simpler proof.

The existence of multiple applications of these results (starting with  \cite{GS2} in the context of a 
priori bounds) and its intrinsic interest have led the community to the search on one hand for simpler proofs 
(cf. \cite{BVV}, \cite{ChL}, \cite{QS}) and on the other for generalizations (see \cite{B} and \cite{LZ}).

One of the generalizations corresponds to the sometimes called H\'enon equation, namely:
\begin{equation}\label{problema}
-\De u =|x|^\al u^p  \quad \hbox{in } \R^N,
\end{equation}
where $N\ge 3$,  $p>1$ and the parameter $\al$ is an arbitrary real number. To begin with, it can always be 
assumed that $\al>-2$, since when $\al\le -2$, there are no solutions of \eqref{problema} in any 
punctured neighborhood of $x=0$ (see for instance Theorem 2.3 in \cite{DDG}).

A complete analysis of problem \eqref{problema} does not seem to have been performed, at the best of our 
knowledge. However, restricting the attention to radially symmetric solutions, it has been shown in 
\cite{BVG} that the nonexistence range of positive solutions is exactly
\begin{equation}\label{subcritico}
1<p<\frac{N+2\al+2}{N-2}.
\end{equation}
This has led (see for instance \cite{PS}) to the statement of the following:

\medskip
\begin{quotation}
{\bf Conjecture A}. Assume $\al>-2$ and $p$ verifies \eqref{subcritico}. Then problem \eqref{problema} 
does not admit any positive solution.
\end{quotation}

\medskip

\noindent Conjecture A has been proved in the case $\al<0$ in \cite{B} and for bounded solutions in dimension 
$N=3$ in \cite{PS}.  There are also some partial results, aside the just cited works. First of all, nonexistence of 
solutions with a further restriction on $p$ is a consequence of the general nonexistence result for 
supersolutions obtained in \cite{MP} (see 
also Corollary 4.2 in \cite{AS}). It is shown there that when $p$ verifies
\begin{equation}
1<p\le \frac{N+\al}{N-2}
\end{equation}
then no positive supersolutions of \eqref{problema} exist. On the other hand, the case $\al\ge 2$ is covered 
in \cite{GS}, with the further restriction \eqref{subcritico-gs}. The same restriction is found in \cite{BVV}, where 
a general $\al>-2$ is allowed. The most general result for $\al>0$ known to us for the moment is the one 
obtained in \cite{B}, where nonexistence of positive solutions was shown to hold provided that 
$$
1<p\le \frac{N+\al+2}{N-2}.
$$
But, as far as we know, when $\al>0$ and solutions are not necessarily bounded, the 
full range \eqref{subcritico} is still not covered, hence Conjecture A remains unsolved for the moment.

\medskip

Another interesting question concerning \eqref{problema} arises when the `critical' case $p=\frac{N+2\al+2}{N-2}$ 
is considered. It can be checked that, for every $\mu>0$, the functions
\begin{equation}\label{bubble}
u(x)=\left( \frac{4}{(\al+2)^2} \right) ^{-\frac{N-2}{4}} \hspace{-3mm} (N(N-2)) ^\frac{N-2}{4} \hspace{-1mm}
\left(\frac{\mu}{\mu^2+ |x|^{2+\al}}\right)^\frac{N-2}{2+\al}
\end{equation}
are solutions of \eqref{problema}. Moreover, when $\al=0$, these reduce to the corresponding ones 
for \eqref{eq-gs} taking $x_0=0$ in \eqref{bubble-gs}. Of course, when $\al\ne 0$, solutions which are 
radially symmetric with respect to a point $x_0\ne 0$ are not possible, and one may wonder whether there is a 
similar classification as that in \cite{CGS} for problem \eqref{problema}. 

The answer to this question is negative. It was shown in Theorem 1.6 of \cite{GGN} that 
for every positive even integer $\al_0$, there exists a continuum of solutions $\{(\al,u_\al) \}$ of \eqref{problema} 
with $p=\frac{N+2\al+2}{N-2}$ which are not radially symmetric, and bifurcate from $\{(\al_0,U_{\al_0})\}$, 
where $U_{\al_0}$ is a radially symmetric solution of \eqref{problema} with $\al=\al_0$ and 
$p=\frac{N+2\al_0+2}{N-2}$. Thus it is likely that non 
radially symmetric solutions exist at least for all large positive values of $\al$.

When $-2<\al<0$, however, the situation is expected to be different: it was shown in \cite{DEL} that positive 
solutions $u$ of \eqref{problema} in the critical case are given by \eqref{bubble} if $|x|^\al u^{p+1}\in L^1(\R^N)$. 
Thus it makes sense to pose the following

\medskip
\begin{quotation}
{\bf Question B}. Assume $-2<\al<0$ and $p=\frac{N+2\al+2}{N-2}$. Are all positive solutions of \eqref{problema} 
of the form \eqref{bubble}?
\end{quotation}

\medskip

We come now to the statement of our results. By a solution of \eqref{problema}, we mean a function 
$u\in H^1_{\rm loc}(\R^N)\cap L^\infty_{\rm loc}(\R^N)$, verifying the equation in the weak sense. 
However, it is worthy of mention that with the aid of the results in \cite{PS} and \cite{BVV}, it suffices 
to assume that $u\in H^1_{\rm loc}(\R^N \setminus \{0\})\cap L^\infty_{\rm loc}(\R^N\setminus \{0\})$ verifies the 
equation in the weak sense in $\R^N\setminus \{0\}$, together with the condition
$$
\lim_{x\to 0} |x|^{\frac{2+\al}{p-1}} u(x)=0.
$$
We will show in our first result that Conjecture A holds in the full regime \eqref{subcritico} and for all 
values $\al>-2$, therefore providing a proof which unifies both cases $\al<0$ and $\al>0$. 

\medskip

\begin{teorema}\label{th-liouville}
Assume $N\ge 3$, $\al>-2$ and $p$ verifies \eqref{subcritico}. 
Then problem \eqref{problema} does not admit any positive solution.
\end{teorema}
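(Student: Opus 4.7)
The strategy is to adapt the Bochner-type integral technique of Bidaut-V\'eron--V\'eron (used in \cite{B} for the case $\al<0$) to the full range $\al>-2$ in a unified way, producing an integral inequality which, combined with a weighted Sobolev embedding and a scaling count, forces $u\equiv 0$ precisely in the subcritical regime \eqref{subcritico}.

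By the remark preceding the statement, it suffices to consider a weak solution $u$ of \eqref{problema} in $\R^N\setminus\{0\}$ satisfying $|x|^{(2+\al)/(p-1)}u(x)\to 0$ as $x\to 0$, and in view of the supersolution result in \cite{MP} we may further assume $p>(N+\al)/(N-2)$, since otherwise \eqref{problema} admits no positive supersolution; standard interior elliptic regularity then provides pointwise estimates on $u$ and $\nabla u$ on annuli, to handle boundary contributions later.

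The core step sets $v=u^\be$ for a parameter $\be>1$ to be chosen and uses
\begin{equation*}
-\De v=\be\,|x|^\al u^{p+\be-1}+\be(1-\be)u^{\be-2}|\nabla u|^2,
\end{equation*}
together with the Bochner identity $\tfrac12\De|\nabla v|^2=|D^2 v|^2+\nabla v\cdot\nabla(\De v)$ and $|D^2 v|^2\ge (\De v)^2/N$. Multiplying by a cutoff $\vf^2$ with $\vf$ supported in an annulus $R\le |x|\le 2R$ (and an analogous inner cutoff near $x=0$) and integrating by parts repeatedly produces a schematic bound
\begin{equation*}
\int_{\R^N}|x|^\al u^{p+\be-1}\vf^2\,dx\le C\int_{\R^N} u^{\be+1}\bigl(|\nabla\vf|^2+\vf\,|\De\vf|\bigr)\,dx,
\end{equation*}
valid whenever $\be$ lies in an interval (depending on $N,\al,p$) that makes the quadratic form produced by Bochner coercive. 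Choosing $\vf=\vf_R$ a dyadic cutoff and invoking the weighted Sobolev/Hardy inequality matched to $|x|^\al$, a scaling count shows the right-hand side vanishes as $R\to\infty$ precisely under \eqref{subcritico}, so that $\int_{\R^N}|x|^\al u^{p+\be-1}=0$ and $u\equiv 0$.

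The main obstacle, and the novelty compared with \cite{B}, is to produce a single choice of $\be$ that keeps the quadratic form coercive throughout the whole subcritical range, simultaneously for $\al>0$ and $\al<0$. The difficulty is concentrated in the case $\al>0$, where the gap between the supersolution threshold $(N+\al)/(N-2)$ and the critical value $(N+2\al+2)/(N-2)$ is widest and the weight $|x|^\al$ dominates at infinity; this forces a more careful bookkeeping of the $|x|^\al$-weighted cross terms arising in Bochner's identity than in the $\al<0$ argument. A secondary technical point is the interaction of this choice with the inner cutoff near $0$, where the decay condition $|x|^{(2+\al)/(p-1)}u\to 0$ must be used to kill the contribution of the singular cutoff, and where $|x|^\al$ degenerates when $\al>0$.
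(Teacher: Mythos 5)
Your proposal has a genuine gap at its core step, and the gap is precisely the point that kept Conjecture A open. You assert that the Bochner identity applied to $v=u^{\be}$, with a suitable choice of $\be$, yields an integral inequality whose right-hand side vanishes under a scaling count ``precisely under \eqref{subcritico}.'' But you never produce such a $\be$, nor verify that the coercivity window for the quadratic form is compatible with the exponent count needed to reach $p<\frac{N+2\al+2}{N-2}$ when $\al>0$. The literature strongly indicates it is not: the Bochner/integral-estimate method of \cite{GS} and \cite{BVV}, applied to the weighted equation, only reaches $p<\frac{N+2}{N-2}$ for general $\al>-2$ (the extra terms coming from $\nabla(|x|^\al)$ are exactly what prevents the method from gaining from positive $\al$), and even the moving-plane refinement of \cite{B} stops at $p\le\frac{N+\al+2}{N-2}$. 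Your paragraph beginning ``The main obstacle\dots'' correctly identifies where the difficulty lies, but identifying it is not the same as resolving it; as written, the proof of the key inequality in the full range is missing, and there is no evidence that ``more careful bookkeeping'' of the weighted cross terms closes the gap.

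The paper's actual route is entirely different and sidesteps integral estimates of Bochner type. After reducing to $p>\frac{N+\al}{N-2}$ via \cite{MP} (as you also do), it passes to Emden--Fowler coordinates $w(t,\theta)=r^{\be}v(r,\theta)$ with $\be=\frac{2+\al}{p-1}$ and runs the moving planes method in the $t$ variable, exploiting that the first-order coefficient $a=2\be-(N-2)$ is strictly positive exactly when \eqref{subcritico} holds. This yields that $|x|^{\be}u(x)$ is nondecreasing in $|x|$, hence that $\widetilde v=\nabla u\cdot x+\be u\ge 0$ is a nontrivial nonnegative solution of the linearized equation $-\De \widetilde v=p|x|^\al u^{p-1}\widetilde v$; by the strong maximum principle $\widetilde v>0$, which forces $\la_1^\Om(-p|x|^\al u^{p-1})>0$ on every bounded domain and therefore the stability of $u$. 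A separate, elementary Liouville theorem for \emph{stable} solutions (Theorem \ref{th-liouville-est}, proved by the test-function choice $\phi=\varphi u$ in the stability inequality plus a dyadic cutoff) then gives $u\equiv 0$ in the full range $1<p\le\frac{N+2\al+2}{N-2}$. If you wish to salvage your approach, the honest comparison is that the test-function computation in Theorem \ref{th-liouville-est} does reach the full subcritical range, but only because the stability inequality \eqref{cond-estab} supplies for free the coercive quadratic form that your Bochner computation would have to manufacture; without the monotonicity/stability input, that coercivity is exactly what is unavailable for $\al>0$.
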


\bigskip

Moreover, in our second result we also answer affirmatively Question B in the full 
regime $-2<\al<0$.

\medskip

\begin{teorema}\label{th-clasif}
Assume $N\ge 3$, $-2<\al <0$ and 
\begin{equation}\label{critico}
p=\frac{N+2\al+2}{N-2}.
\end{equation}
Let $u \in H^1_{\rm loc}(\R^N)\cap L^\infty_{\rm loc}(\R^N)$ be a positive solution of 
\eqref{problema}. Then $u$ is of the form \eqref{bubble} for some $\mu>0$.
\end{teorema}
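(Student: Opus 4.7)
I would prove Theorem \ref{th-clasif} by showing that $u$ is radially symmetric about the origin and then classifying radial solutions via an ODE analysis, recovering the explicit family \eqref{bubble}. An alternative route would be to establish $|x|^\al u^{p+1}\in L^1(\R^N)$ and invoke the classification of \cite{DEL}, but producing the required strictly-faster-than-$|x|^{-(N-2)/2}$ decay at infinity seems no easier than the symmetry route.

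\textbf{Step 1: Decay at infinity.} First I would show $u(x)\to 0$ as $|x|\to\infty$. Suppose to the contrary that $x_n\to\infty$ with $u(x_n)\ge\delta>0$ and consider the rescaling
$$w_n(y)=u(x_n)^{-1}u(x_n+\lambda_n y),\qquad \lambda_n=u(x_n)^{-(p-1)/2}|x_n|^{-\al/2},$$
which satisfies $-\Delta w_n=(|x_n|^{-1}|x_n+\lambda_n y|)^\al w_n^p$. Because $\al+2>0$, one checks $\lambda_n/|x_n|\to 0$, so the weight converges locally uniformly to $1$; uniform local bounds produced by the Pol\'a\v{c}ik--Quittner--Souplet doubling lemma applied to the natural scaling invariant $u^{(p-1)/2}|x|^{\al/2}$ on $\{|x|\ge R_0\}$ then let me extract a limit $w$ solving $-\Delta w=w^p$ on $\R^N$ with $w(0)=1$. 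Since $-2<\al<0$ forces $p<(N+2)/(N-2)$, Theorem \ref{th-liouville} applied with weight exponent $0$ (the classical Gidas--Spruck result) rules out such a $w$, giving a contradiction; the same argument in fact yields $u(x)\le C|x|^{-(N-2)/2}$ for large $|x|$.

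\textbf{Step 2: Radial symmetry via moving planes.} With $u\to 0$ at infinity and $u$ locally bounded, I would apply the moving planes method. The decisive structural feature is that $|x|^\al$ is radially symmetric and strictly decreasing in $|x|$ (since $\al<0$), so reflecting across $\{x_1=\lambda\}$ with $\lambda\ge 0$ and $x_1>\lambda$ gives $|x^\lambda|\le|x|$, hence $|x^\lambda|^\al\ge|x|^\al$. This yields the favourable sign in the linearised equation for $u-u(\cdot^\lambda)$, and the Berestycki--Nirenberg framework produces symmetry across every hyperplane through the origin, so $u$ is radial about $0$. The singular weight at the origin can be handled either via a preliminary Kelvin transformation (which preserves the equation, as a direct computation of scaling exponents confirms) or by a cutoff combined with a Hopf lemma.

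\textbf{Step 3 and main obstacle.} With $u=u(r)$, the Emden--Fowler substitution $w(t)=r^{(N-2)/2}u(r)$, $t=\log r$, converts $-u''-(N-1)r^{-1}u'=r^\al u^p$ into the autonomous Hamiltonian equation $w''(t)=\tfrac{(N-2)^2}{4}w(t)-w(t)^p$, with conserved energy $E=\tfrac{1}{2}(w')^2-\tfrac{(N-2)^2}{8}w^2+\tfrac{1}{p+1}w^{p+1}$. Positive solutions regular at $r=0$ and decaying at infinity correspond to $w\to 0$ as $t\to\pm\infty$, i.e., homoclinic orbits at energy level $E=0$; these form a one-parameter family (time translations), which a direct calculation identifies with \eqref{bubble} parametrised by $\mu>0$. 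The main obstacle I expect is Step 2: running moving planes on the unbounded domain $\R^N$ with a weight that is singular at the origin and a solution of only mild decay at infinity requires careful control at both ends, and the choice between working directly versus passing through the Kelvin transform is the technical heart of the argument.
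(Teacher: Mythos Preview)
Your strategy is coherent but takes a route different from the paper's, and Step~2 contains a genuine gap that you yourself flag without resolving.

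The paper does \emph{not} obtain decay by a doubling/blow-up argument. Instead it reuses the cylinder machinery of Theorem~\ref{monotonia}: in the critical case, the moving-planes argument in the $(t,\theta)$ variables yields the dichotomy that either $|x|^{\beta}u(x)$ is nondecreasing in $|x|$, or $u$ coincides with its Kelvin transform about some sphere $\{|x|=\mu\}$. The first alternative forces $u$ to be stable (exactly as in the proof of Theorem~\ref{th-liouville}), and Theorem~\ref{th-liouville-est} then gives $u\equiv 0$. Hence the Kelvin symmetry \eqref{kelvin} must hold, which immediately gives the sharp asymptotic $|x|^{N-2}u(x)\to \mu^{N-2}u(0)$. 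With this \emph{exact} $|x|^{2-N}$ decay and $-2<\al<0$, the moving-planes argument in $\R^N$ is routine, and the ODE classification finishes the proof; alternatively the decay gives $|x|^\al u^{p+1}\in L^1(\R^N)$ and one invokes \cite{DEL}. So the route you dismissed as ``no easier'' is in fact the one the paper takes, precisely because the cylinder moving planes hands you the full $|x|^{2-N}$ decay for free.

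The gap in your approach is that the doubling argument delivers only $u(x)\le C|x|^{-(N-2)/2}$, which is the self-similar (borderline) rate: plugging it back into the equation gives $-\Delta u\le C|x|^{-N/2-1}$, and potential estimates return the same exponent, so iteration does not improve it. With this decay the linearised potential in the reflected inequality behaves like $|x|^{-2}$, and neither the narrow-domain maximum principle nor the energy/Hardy method applies in a half-space without a smallness constant you do not control; the Kelvin transform does not help either, since $|x|^{-(N-2)/2}$ decay maps to a possible $|x|^{-(N-2)/2}$ singularity at the origin, so the transformed function need not extend across $0$. Upgrading the decay from $|x|^{-(N-2)/2}$ to something strictly faster is exactly the missing idea, and it is what the paper's Kelvin-symmetry step supplies.
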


\bigskip

The proofs of Theorems \ref{th-liouville} and \ref{th-clasif} rely on the 
well-known trick of writing the equation in polar coordinates, and then introducing the Emden-Fowler 
transformation. An application of the moving planes method as in \cite{BM} yields the 
monotonicity of the function 
$$
|x|^{\frac{2+\al}{p-1}} u(x)
$$
in $\R^N$, provided that \eqref{subcritico} holds. In the case $\al=0$, the same argument 
applied with respect to an arbitrary origin then shows that $u$ has to be constant, which is not 
possible, thus we obtain a simplified proof of the Liouville theorem in \cite{GS}. 
But this argument does not carry over to deal with $\al\ne 0$. However, the essential point in 
our proof is to realize that the monotonicity alluded to above shows that
$$
u \hbox{ is a stable solution of }\eqref{problema},
$$
in the usual sense that for every $\phi \in C_0^\infty(\R^N)$ there holds
\begin{equation}\label{cond-estab}
\int_{\R^N} |\nabla \phi|^2 - p |x|^\al u^{p-1} \phi^2 \ge 0.
\end{equation}
A Liouville theorem for stable solutions of \eqref{problema} is already available 
(cf. \cite{DDG} and the previous work \cite{F} for problem \eqref{eq-gs}) and it implies $u\equiv 0$ in $\R^N$, 
a contradiction. For the reader's convenience, we include an independent, simplified proof of 
this theorem. 

As for Theorem \ref{th-clasif}, we obtain as a byproduct of the same moving plane argument that 
$u$ behaves at infinity like the fundamental solution of the Laplacian, hence the results in \cite{DEL} 
can be used to obtain that $u$ is of the form \eqref{bubble}.

\medskip

Finally, it is interesting to mention that the approach followed to prove Theorem \ref{th-liouville} 
can be used to deal with other related problems. For instance, when the weight $|x|^\al$ is replaced 
by $|x_1|^\al$ or even more general functions. In this regard, the problem 
\begin{equation}\label{problema-2}
-\De u = x_1^m u^p \quad \hbox{in } \R^N,
\end{equation}
where $m$ is a positive integer, has been already considered in previous literature. We refer to 
\cite{BCN}, \cite{L}, \cite{DL}. However, in all these works only \emph{odd} integers are allowed. 
Our methods enable us to obtain a Liouville theorem in the complementary case where $m$ is an 
even integer. The proof of the following result is a slight variant of that of Theorem \ref{th-liouville} 
and will not be given.

\begin{teorema}
Assume $N\ge 3$, $m$ is an even integer and 
$$
1<p<\frac{N+2m+2}{N-2}.
$$
Then problem \eqref{problema-2} does not admit any positive solution.
\end{teorema}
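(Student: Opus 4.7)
The plan is to follow the strategy used to prove Theorem \ref{th-liouville}, with the radial weight $|x|^\al$ systematically replaced by $x_1^m = |x_1|^m$; this is legitimate because $m$ is even, so the weight is everywhere nonnegative. The scaling symmetry of \eqref{problema-2} is $u_\la(x)=\la^{(m+2)/(p-1)}u(\la x)$, which produces the critical exponent $(N+2m+2)/(N-2)$ and identifies $(m+2)/(p-1)$ as the appropriate scaling weight. I would pass to polar coordinates $r=|x|$, $\omega=x/r$ and introduce the Emden--Fowler variables $v(t,\omega)=r^{(m+2)/(p-1)} u(r\omega)$, $t=\log r$. Since $x_1^m = r^m \omega_1^m$, after multiplying the equation by $r^2$ the factor $r^m$ is absorbed into the scaling, and $v$ satisfies a semilinear equation on $\R\times S^{N-1}$ which is autonomous in $t$, with nonlinearity $\omega_1^m v^p$. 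The bounded nonnegative angular factor $\omega_1^m$ does not depend on $t$ and hence is harmless for the comparison principle.

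Because the transformed equation is autonomous in $t$ and the nonlinearity is monotone in $v$ at positive values, the moving planes method of \cite{BM} can be applied in the $t$ variable exactly as in the proof of Theorem \ref{th-liouville}, yielding monotonicity of $v$ in $t$. Reverting to the original variables, this means that $r\mapsto r^{(m+2)/(p-1)} u(r\omega)$ is nondecreasing along every ray from the origin. As in the proof of Theorem \ref{th-liouville}, this monotonicity furnishes a positive supersolution of the linearized operator $-\De - p\, x_1^m u^{p-1}$, from which the stability inequality
\begin{equation*}
\int_{\R^N} |\na \phi|^2 - p\, x_1^m u^{p-1} \phi^2 \ge 0 \qquad \forall\,\phi\in C_0^\infty(\R^N)
\end{equation*}
follows by a standard Allegretto--Piepenbrink argument. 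The remaining step is a Farina-type Liouville theorem for stable positive solutions of \eqref{problema-2} in the subcritical range, analogous to the results of \cite{F} and \cite{DDG}: test the stability inequality with $\phi=u^q\eta$ for a cutoff $\eta$ and $q$ in a suitable range, integrate by parts using the equation, and exploit the subcriticality $1<p<(N+2m+2)/(N-2)$ to close the resulting bootstrap and conclude $u\equiv 0$.

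The main point to be checked is that the Farina-type integral estimates are insensitive to the replacement of the radial weight $|x|^\al$ by the planar weight $x_1^m$. While $x_1^m$ vanishes on the entire hyperplane $\{x_1=0\}$ rather than at a single point, it is locally smooth and grows polynomially like $|x|^m$ at infinity, so the volume integrals that power the Farina argument behave in the same way and yield the same subcriticality threshold. I expect this to be the main, but ultimately mild, obstacle; the moving planes step and the passage to the stability inequality go through verbatim because they never use the precise form of the weight, only its nonnegativity, smoothness away from the zero set, and compatibility with the Emden--Fowler scaling.
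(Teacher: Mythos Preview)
Your proposal is correct and follows precisely the route the paper indicates: the paper does not write out a proof of this theorem but states only that it is ``a slight variant of that of Theorem~\ref{th-liouville},'' and your sketch---Emden--Fowler change of variables (so that $x_1^m=r^m\omega_1^m$ contributes a $t$-independent angular factor), moving planes in $t$ to obtain monotonicity of $r^{(m+2)/(p-1)}u$, passage to stability via the positive supersolution $x\cdot\nabla u+\tfrac{m+2}{p-1}u$ of the linearized operator, and a Farina-type Liouville theorem for stable solutions---is exactly that variant. You also correctly single out the one place where the argument is not verbatim (the integral estimates in the stable Liouville step, since $x_1^m$ vanishes on a hyperplane rather than at a point) and assess it as the main but mild obstacle; the only other small point to keep in mind is that, as in the proof of Theorem~\ref{th-liouville}, the moving planes step requires $b>0$, i.e.\ $p>\frac{N+m}{N-2}$, so the complementary low range must be handled separately via a supersolution argument \`a la \cite{MP,AS}.
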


\medskip

The rest of the paper is organized as follows: in Section 2 we include some preliminaries related
with regularity of solutions, principal eigenvalues in smooth bounded domains with unbounded 
coefficients and the Liouville theorem for stable solutions of \eqref{problema}. 
Section 3 is dedicated to the proof of Theorems \ref{th-liouville} and \ref{th-clasif}.

\bigskip

\section{Preliminaries}
\setcounter{section}{2}
\setcounter{equation}{0}

In this section we will consider some preliminaries on positive solutions of \eqref{problema}. 
Most of them deal with regularity, especially in the case $\al<0$. We will also briefly deal with an 
eigenvalue problem with coefficients which are not bounded and we will include a proof of the nonexistence 
of stable positive solutions of \eqref{problema} when $1<p\le \frac{N+2\al+2}{N-2}$.

The first result is a consequence of standard regularity theory.

\begin{lema}\label{regularidad}
Let $u\in H^1_{\rm loc}(\R^N)\cap L^\infty_{\rm loc}(\R^N)$ be a positive weak solution of \eqref{problema}. 
Then:
\begin{itemize}
\item[(a)] If $-2<\al <0$, we have $u\in C^\infty(\R^N\setminus \{0\}) \cap W^{2,q}_{\rm loc}(\R^N)$ for 
some $q>\frac{N}{2}$. Moreover, there exists $\eta\in (0,1)$ such that $u\in C^\eta(\R^N)$.
In addition, there exists $C>0$ such that 
\begin{equation}\label{des-gradiente}
|\nabla u(x) |\le \frac{C}{|x|} \quad \hbox{if } 0<|x| <1.
\end{equation}

\item[(b)] When $\al\ge 0$, $u\in C^\infty(\R^N\setminus \{0\}) \cap C^{2,\eta}(\R^N)$ for some 
$\eta\in (0,1)$. 
\end{itemize}
\end{lema}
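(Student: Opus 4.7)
\smallskip
\noindent\textbf{Proof plan.} The plan is to split the argument into an \emph{away-from-origin} analysis (where $|x|^\al$ is smooth) and a \emph{near-origin} analysis, handling the two ranges $\al\geq 0$ and $-2<\al<0$ by different integrability considerations for the weight.

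First I would verify $u\in C^\infty(\R^N\setminus\{0\})$ in both cases. Since $|x|^\al$ is $C^\infty$ away from the origin and $u\in L^\infty_{\rm loc}$, the right-hand side $|x|^\al u^p$ is locally bounded there. Standard $W^{2,q}$ Calder\'on--Zygmund theory followed by Schauder estimates and a bootstrap (the right-hand side gains H\"older regularity each time $u$ does) yields $u\in C^\infty(\R^N\setminus\{0\})$.

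For part (b) across $x=0$, the weight is globally continuous (and H\"older of exponent $\min(1,\al)$), so $|x|^\al u^p\in L^\infty_{\rm loc}(\R^N)$ gives $u\in W^{2,q}_{\rm loc}(\R^N)$ for every $q<\infty$ and in particular $u\in C^{1,\eta}_{\rm loc}$; a further Schauder step upgrades $u$ to $C^{2,\eta}_{\rm loc}(\R^N)$. For part (a), the crucial remark is that $|x|^\al\in L^q_{\rm loc}(\R^N)$ for every $q<-N/\al$; since $\al>-2$ forces $-N/\al>N/2$, one may fix $q>N/2$ with $|x|^\al u^p\in L^q_{\rm loc}(\R^N)$, and Calder\'on--Zygmund theory delivers $u\in W^{2,q}_{\rm loc}(\R^N)$. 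The Morrey embedding $W^{2,q}\hookrightarrow C^\eta$ with $\eta=2-N/q>0$ then yields the claimed global H\"older regularity.

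For the gradient bound \eqref{des-gradiente}, I would use a scaling argument: for $x_0$ with $0<|x_0|<1$, set $r=|x_0|/2$ and $v(y)=u(x_0+ry)$ on $B_1$. Then $-\De v(y)=r^2\,|x_0+ry|^\al\, u(x_0+ry)^p$, and since $r\leq|x_0+ry|\leq 3r$ on $B_1$ and $\al<0$, the right-hand side is dominated pointwise by $r^{2+\al}\,\|u\|_{L^\infty(B_{3/2})}^p$. Because $2+\al>0$, the factor $r^{2+\al}$ stays bounded as $r\to 0$, so interior gradient estimates give $|\nabla v(0)|\leq C$ uniformly in $x_0$, and unscaling produces $|\nabla u(x_0)|=|\nabla v(0)|/r\leq C/|x_0|$. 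The main technical point is choosing the scale $r=|x_0|/2$ correctly so that the singular weight is controlled by its $L^\infty$-norm on the annulus $\{r\leq|x|\leq 3r\}$; beyond this, everything reduces to routine applications of elliptic regularity, and no deeper obstacle is expected.
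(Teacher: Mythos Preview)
Your proposal is correct and follows essentially the same approach as the paper: bootstrap to $C^\infty$ away from the origin, use that $|x|^\al\in L^q_{\rm loc}$ for some $q>N/2$ when $-2<\al<0$ to get $W^{2,q}_{\rm loc}$ regularity and hence $C^\eta$ via Sobolev--Morrey, and obtain the gradient bound by a scaling argument on balls of radius $|x_0|/2$. The only cosmetic difference is that the paper obtains $u\in W^{2,q}(B)$ by solving an auxiliary Dirichlet problem and invoking a removable-singularity argument for the bounded harmonic difference $u-w$, whereas you appeal directly to interior $L^q$ regularity; and the paper cites the gradient estimate from \cite{GT} rather than deriving it by rescaling, but these are the same computation.
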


\begin{proof}
The assertion about $C^\infty$ regularity in $\R^N \setminus \{0\}$ is immediate by bootstrapping and the fact that 
$|x|^\al$ is $C^\infty$ there, while $u$ is positive (cf. \cite{GT}). 

The regularity in the case $\al\ge 0$ is a consequence of standard theory: we obtain that $\De u$ 
is locally bounded in $\R^N$, so that $u\in C^1(\R^N)$. This 
implies in turn that $\De u \in C^{\eta}(\R^N)$ for some $\eta\in (0,1)$, so that $u \in C^{2,\eta}(\R^N)$. 
Observe that solutions become more regular the larger $\al$ is.

When $-2<\al <0$, we see that $|x|^\al \in L^q_{\rm loc}(\R^N)$ for every $q<\frac{N}{|\al|}$. Thus we 
may choose and fix such a $q$ additionally verifying $q>\frac{N}{2}$. This implies that 
$h=|x|^\al u^{p} \in L^q_{\rm loc}(\R^N)$. Denoting the unit ball of 
$\R^N$ by $B$, we can use Theorem 9.15 in \cite{GT} to guarantee that the problem
$$
\left\{
\begin{array}{ll}
-\De w = h & \hbox{in }B\\
\ \ \ w=u & \hbox{on } \p B
\end{array}
\right.
$$
admits a unique strong solution $w\in W^{2,q}(B)$. Since $q>\frac{N}{2}$ it also follows by Sobolev 
embeddings that $w\in C^\eta(\R^N)$ for some $\eta\in (0,1)$. We deduce that $\De (u-w)=0$ in 
$B\setminus \{0\}$, while $u-w$ is 
bounded in $B$. It is well-known that this implies $u \equiv w$ in $B$. 
We conclude that $u\in W^{2,q}_{\rm loc}(\R^N)\cap C^\eta(\R^N)$.

To show \eqref{des-gradiente}, we make use once more of standard regularity. Fix $x\in B\setminus 
\{0\}$ and consider the ball $B_x$ with center $x$ and radius $\frac{|x|}{2}$. There exists a positive constant 
$C$ which does not depend on $x$ nor on $u$ such that
\begin{equation}\label{est-grad-2}
|x|  |\nabla u(y)| \le C (|x|^2 \sup_{z \in B_x}  |\De u(z)| + \sup_{z \in B_x} |u(z)|)
\end{equation}
for every $y\in B_x$ (cf. for instance (4.45) in \cite{GT}). Observe that $|z|\ge \frac{|x|}{2}$ for 
every $z\in B_x$, so that
$$
|x|^2 |\De u(z)| \le C |x| ^2 |z|^\al \le C |x|^{2+\al}\le C.
$$
Thus taking $y=x$ in \eqref{est-grad-2} we obtain \eqref{des-gradiente}.
\end{proof}

\bigskip

Next we consider a special solution of the linearized equation, which is one of the keys to 
our proofs in Section 3. Also, we need to `fine tune' the regularity of the gradient of the 
solutions near $x=0$. Throughout the rest of the paper, we will denote
\begin{equation}\label{def-beta}
\beta=\frac{2+\al}{p-1}.
\end{equation}

\begin{lema}\label{lema-solucion}
Assume $u\in H^1_{\rm loc}(\R^N)\cap L^\infty_{\rm loc}(\R^N)$ is a positive weak solution of 
\eqref{problema}. Then the function 
$v(x)= \nabla u(x) \cdot x + \be u(x)$ belongs to $C^\infty (\R^N\setminus \{0\}) \cap 
W^{2,q}_{\rm loc}(\R^N)$ for some $q>\frac{N}{2}$ and verifies
$$
-\De v = p |x|^{\al} u^{p-1} v \quad \hbox{in } \R^N\setminus \{0\}.
$$
Moreover $v\in C^\eta(\R^N)$ for some $\eta \in (0,1)$ and in particular
\begin{equation}\label{lim-grad-cero}
\lim_{x\to 0} \nabla u(x) \cdot x=0.
\end{equation}
\end{lema}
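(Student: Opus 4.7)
The starting point is to recognize $v$ as the infinitesimal generator of the scaling symmetry $u_\lambda(x)=\lambda^\beta u(\lambda x)$, which leaves the equation invariant precisely because $\beta=(2+\al)/(p-1)$ satisfies $p\be=2+\al+\be$. Concretely, I would just verify the linearized equation by direct computation: writing $v=x\cdot\nabla u+\be u$ and using the identity $\Delta(x\cdot\nabla u)=2\Delta u+x\cdot\nabla(\Delta u)$ together with $-\Delta u=|x|^\al u^p$, one finds
$$-\Delta v=(2+\al+\be)|x|^\al u^p+p|x|^\al u^{p-1}(x\cdot\nabla u).$$
Substituting $x\cdot\nabla u=v-\be u$ and using the relation $p\be=2+\al+\be$ cancels the first term and yields $-\Delta v=p|x|^\al u^{p-1}v$ in $\R^N\setminus\{0\}$, where all operations are classical by Lemma \ref{regularidad}.

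The next step is to promote this to a weak identity in all of $\R^N$ and to obtain interior regularity at the origin. Lemma \ref{regularidad} ensures $u\in W^{2,q}_{\rm loc}(\R^N)$ with some $q>N/2$ (chosen also with $q<N/|\al|$ when $\al<0$), so $\nabla u\in W^{1,q}_{\rm loc}$ and hence $v\in W^{1,q}_{\rm loc}(\R^N)$. Moreover \eqref{des-gradiente} together with the local Hölder bound on $u$ forces $v$ to be bounded near $0$; in particular $|x|^\al u^{p-1}v\in L^q_{\rm loc}(\R^N)$. I would then show the linearized equation holds weakly across the origin via a standard capacity/cut-off argument: for $\phi\in C_c^\infty(\R^N)$ and a radial cut-off $\eta_\ve$ vanishing in $B_\ve$ and equal to $1$ outside $B_{2\ve}$, testing with $\phi\eta_\ve$ gives a remainder controlled by
$$\frac{C\|\phi\|_\infty}{\ve}\int_{B_{2\ve}}|\nabla v|\le C\ve^{N(1-1/q)-1}\to 0,$$
which vanishes since $q>N/2\ge N/(N-1)$. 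Elliptic regularity (Theorem 9.15 in \cite{GT}) then upgrades $v$ to $W^{2,q}_{\rm loc}(\R^N)$, and Sobolev embedding delivers $v\in C^\eta(\R^N)$.

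Finally, to identify $v(0)$ and conclude \eqref{lim-grad-cero}, I would compute the mean of $v$ over a small ball. Using the divergence identity $\mathrm{div}(ux)=x\cdot\nabla u+Nu$ on the annulus $\ve<|x|<r$ and letting $\ve\to 0$ (the boundary term at $\p B_\ve$ is $O(\ve^N)$ because $u$ is continuous at $0$), one obtains
$$\int_{B_r}x\cdot\nabla u\,dx=r\int_{\p B_r}u\,d\s-N\int_{B_r}u\,dx,$$
so dividing by $|B_r|$ and letting $r\to 0$ yields $\fint_{B_r}x\cdot\nabla u\to 0$ by continuity of $u$. Consequently $\fint_{B_r}v\to\be u(0)$; on the other hand continuity of $v$ at $0$ gives $\fint_{B_r}v\to v(0)$, so $v(0)=\be u(0)$, and continuity of both $v$ and $u$ at the origin then forces $\nabla u(x)\cdot x\to 0$.

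The main obstacle is the middle step: bridging from the pointwise equation on $\R^N\setminus\{0\}$ to a genuine $W^{2,q}_{\rm loc}$ solution at the origin. Once the removable-singularity cut-off estimate succeeds, the rest is bookkeeping: the regularity follows from Calderón–Zygmund theory and the Hölder limit identifies $v(0)$ via an integral mean.
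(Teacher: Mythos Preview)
Your argument is correct and follows the same outline as the paper: verify the linearized equation on $\R^N\setminus\{0\}$ by direct computation, establish $W^{2,q}_{\rm loc}$ regularity across the origin, and read off \eqref{lim-grad-cero} from the continuity of $v$. The paper, however, streamlines the two places where you work hardest. For the regularity bridge it does not use a cut-off/capacity argument; instead it reuses the device from Lemma~\ref{regularidad}: once $v$ is known to be bounded near $0$ (from \eqref{des-gradiente}) and $h=p|x|^\al u^{p-1}v\in L^q_{\rm loc}$, one solves $-\De w=h$ in the unit ball with $w=v$ on the boundary; then $v-w$ is bounded and harmonic in the punctured ball, hence harmonic in the full ball by the removable-singularity theorem for the Laplacian, hence identically zero, giving $v\in W^{2,q}_{\rm loc}$ at once. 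For the final limit, the paper simply observes that since $\nabla u(x)\cdot x=v(x)-\be u(x)$ is continuous at $0$, the limit $\ell$ exists, and $\ell\ne 0$ would force $r u_r\to\ell$ along every ray, making $u$ blow up logarithmically and contradicting its boundedness; your divergence-theorem computation identifying $v(0)=\be u(0)$ is a perfectly valid alternative, just more elaborate than needed.
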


\begin{proof}
Let $v(x)= \nabla u(x) \cdot x + \be u(x)$. Then it is not hard to see that 
\begin{align*}
-\De v & = -\nabla (\De u) \cdot x - (\be+2)\De u\\
& =(\al+\be+2) |x|^\al u^p + p |x|^\al u^{p-1} \nabla u\cdot x\\
& =(\al+\be+2-\be p) |x|^\al u^p + p |x|^\al u^{p-1} v\\
& = p |x|^\al u^{p-1} v
\end{align*}
in $\R^N\setminus \{0\}$. 

On the other hand, it is clear from Lemma \ref{regularidad} that $v\in C^{1,\eta}(\R^N)$ 
when $\al	\ge 0$, while $v\in L^\infty_{\rm loc}(\R^N)$ if $\al<0$ (cf. in particular equation \eqref{des-gradiente}). 
Then $h=p|x|^{\al} u^{p-1}v \in L^q_{\rm loc}(\R^N)$ for some $q> \frac{N}{2}$, and reasoning 
as in the proof of Lemma \ref{regularidad} we deduce $v\in W^{2,q}_{\rm loc}(\R^N)$. The 
$C^\infty$ regularity in $\R^N\setminus \{0\}$ is immediate from Lemma \ref{regularidad}.

From Sobolev embeddings,  we also have $v\in C^\eta(\R^N)$ for some $\eta \in (0,1)$. 
We deduce that $\nabla u(x)\cdot x$ is continuous at zero, so that the limit
$$
\ell =\lim_{x\to 0} \nabla u(x) \cdot x
$$
exists. Since $u$ is bounded at zero, this limit has to be zero. This concludes the proof.
\end{proof}

\bigskip

It is the turn now to consider an auxiliary eigenvalue problem. In what follows, we deal with 
a smooth bounded domain $\Om$ of $\R^N$, and a coefficient $a\in L^q(\Om)$, where 
$q>\frac{N}{2}$. We are interested in the principal eigenvalue of
\begin{equation}\label{prob-autov}
\left\{
\begin{array}{ll}
-\De u +a(x) u = \la u & \hbox{in }\Om\\
\ \ \ u=0 & \hbox{on } \p\Om,
\end{array}
\right.
\end{equation}
that is, the first of the eigenvalues, which is associated to a positive eigenfunction. Although 
we expect the next result to be well-known, we have not been able to find a pertinent reference. 
We refer the reader to Theorem 1 in \cite{CRQ}, where the extension to the $p-$Laplacian 
setting is analyzed.

\begin{lema}\label{autovalores}
Assume $\Om\subset \R^N$ is a smooth bounded domain, and let $a \in L^q(\Om)$ for 
some $q> \frac{N}{2}$. Then problem \eqref{prob-autov} admits a principal eigenvalue 
$\la_1^\Om(a)$, which can be variationally characterized as
\begin{equation}\label{variacional}
\la_1^\Om(a) = \inf_{w\in H_0^1(\Om)} \frac{\ds \int_\Om |\nabla w|^2 + a(x) w^2}{\ds \int_\Om w^2}.
\end{equation}
Moreover, there exists an associated positive eigenfunction $\phi \in H_0^1(\Om)\cap W^{2,q}(\Om)
\cap C^\eta(\Omb)$, for some $\eta \in (0,1)$.
\end{lema}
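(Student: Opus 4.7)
The plan is to prove Lemma \ref{autovalores} by the direct method of the calculus of variations applied to the Rayleigh quotient. Write $J(w) = \int_\Om |\na w|^2 + a(x) w^2$ and let $\mathcal{S} = \{w \in H_0^1(\Om) : \|w\|_{L^2(\Om)} = 1\}$; I would show that $\la_1^\Om(a) := \inf_{\mathcal{S}} J$ is finite and attained at a positive eigenfunction, then upgrade its regularity. The crucial input is the hypothesis $q > N/2$: the conjugate $q'$ then satisfies $2q' < 2^* = 2N/(N-2)$, so the embedding $H_0^1(\Om) \hookrightarrow L^{2q'}(\Om)$ is continuous and compact.

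First, a coercivity estimate. By H\"older's inequality $|\int_\Om a w^2| \le \|a\|_{L^q} \|w\|_{L^{2q'}}^2$, and interpolating between $L^2$ and $L^{2^*}$ followed by Sobolev yields
$$\|w\|_{L^{2q'}}^2 \le C\, \|w\|_{L^2}^{2(1-\theta)} \|\na w\|_{L^2}^{2\theta}$$
for some $\theta \in (0,1)$. Young's inequality then gives $J(w) \ge \tfrac12 \|\na w\|_{L^2}^2 - C \|w\|_{L^2}^2$, so $\la_1^\Om(a) > -\infty$ and every minimizing sequence is bounded in $H_0^1(\Om)$. Taking such a sequence $\{w_n\} \subset \mathcal{S}$, I would extract a weak limit $\phi$ in $H_0^1(\Om)$; the compactness of the embedding into $L^{2q'}$ forces $\int_\Om a w_n^2 \to \int_\Om a \phi^2$, while $w_n \to \phi$ in $L^2(\Om)$ preserves $\|\phi\|_{L^2}=1$. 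Combined with the weak lower semicontinuity of $\int |\na w|^2$, this shows $J(\phi) \le \liminf J(w_n) = \la_1^\Om(a)$, so $\phi$ attains the infimum. Since $J(|\phi|) = J(\phi)$, I may replace $\phi$ by $|\phi| \ge 0$, and the standard Lagrange multiplier argument identifies $\phi$ as a weak solution of \eqref{prob-autov} with $\la = \la_1^\Om(a)$, yielding \eqref{variacional}.

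Next, regularity and strict positivity. A Moser/Stampacchia iteration (Theorem 8.15 of \cite{GT}), whose hypotheses are exactly met by $a \in L^q$ with $q > N/2$, promotes $\phi \in H_0^1(\Om)$ to $\phi \in L^\infty(\Om)$. Then $(\la_1^\Om(a) - a)\phi \in L^q(\Om)$, so Theorem 9.15 of \cite{GT} applied to $-\De \phi = (\la_1^\Om(a) - a)\phi$ with zero boundary data gives $\phi \in W^{2,q}(\Om) \cap H_0^1(\Om)$, and the Sobolev embedding $W^{2,q}(\Om) \hookrightarrow C^\eta(\Omb)$ for $q > N/2$ gives the H\"older regularity. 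Finally, the weak Harnack inequality for nonnegative supersolutions of $-\De \phi + (a-\la_1^\Om(a))^+ \phi \ge 0$ with $L^q$ potential (Theorem 8.20 of \cite{GT}) excludes any interior zero of $\phi$; otherwise $\phi \equiv 0$ in $\Om$, contradicting $\|\phi\|_{L^2}=1$.

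The main obstacle is the unboundedness of $a$: both the passage to the limit $\int a w_n^2 \to \int a\phi^2$ along the minimizing sequence, and the applicability of the strong maximum principle / Harnack inequality to an equation with an $L^q$ potential, rely on exactly the same threshold $q > N/2$ and must be handled with the interpolation-plus-compact-embedding mechanism above rather than by routine $L^\infty$ estimates on $a$.
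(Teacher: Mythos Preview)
Your proof is correct and follows the standard direct-method route. Note, however, that the paper does not supply its own proof of this lemma: it states the result as essentially well known and simply refers to \cite{CRQ} for a treatment in the more general $p$-Laplacian setting. Your argument is precisely the self-contained proof one would write to fill that gap---direct minimization of the Rayleigh quotient, with the key observation that $q>N/2$ forces $2q'<2^*$ so the potential term is handled by the compact embedding $H_0^1(\Om)\hookrightarrow L^{2q'}(\Om)$, followed by Moser iteration and $L^q$ Calder\'on--Zygmund regularity for the eigenfunction. One small remark: for the strict positivity step you could equally well invoke the strong maximum principle for $W^{2,q}$ functions with $q>N/2$ from \cite{OP} or \cite{T}, which is exactly the tool the paper uses in the very next lemma.
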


\bigskip

We deal next with a very well-known property of the principal eigenvalue. 
When the coefficient $a$ is bounded, $\la_1^\Om(a)$ can be characterized as:
$$
\la_1^\Om(a)= \sup\left\{\la>0:\ \begin{array}{c} \hbox{there exists }v>0 \hbox{ in }\Omb \hbox{ such that }\\ 
-\De v +a(x) v \ge \la v \ \hbox{ a. e. in } \Om\end{array}\right\},
$$
while the functions $v$ are taken in $W^{2,N}(\Om)$ (see for instance \cite{BNV} or a more recent 
account in unbounded domains in \cite{BR}). In particular, the existence of a positive function $v\in W^{2,N}(\Om)$ 
verifying $-\De v +a(x) v\ge 0$ in $\Om$ implies $\la_1^\Om (a)>0$.

We are not aware of any similar property when the coefficient $a$ is not bounded, or when the 
function $v$ is not in $W^{2,N}(\Om)$. Thus we obtain one which is sufficient for our purposes in 
Section 3.

\begin{lema}\label{autov-positivo}
Assume $\Om\subset \R^N$ is a smooth bounded domain and let $a \in L^q(\Om)$ for 
some $q> \frac{N}{2}$. If there exists $v\in W^{2,q}(\Om)\cap C(\Omb)$ such that $v>0$ in $\Omb$ 
and $-\De v +a(x) v \ge 0$ a. e. in $\Om$, then 
$$
\la_1^\Om(a)>0.
$$
\end{lema}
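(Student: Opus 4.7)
The plan is to exploit the variational characterization \eqref{variacional} furnished by Lemma \ref{autovalores} via a Picone-type substitution. Let $\phi\in H_0^1(\Om)\cap W^{2,q}(\Om)\cap C^\eta(\Omb)$ be the positive principal eigenfunction associated with $\la_1^\Om(a)$, and set $w:=\phi/v$. Since $v$ is continuous and strictly positive on the compact set $\Omb$, it is bounded above and below by positive constants; combined with the regularity and vanishing boundary values of $\phi$, this forces $w\in H_0^1(\Om)$ with $w\not\equiv 0$.

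\textbf{Key identity and integration by parts.} Expanding $\nabla\phi=v\nabla w+w\nabla v$ and rewriting the cross term by means of the product rule $\nabla(vw^2)=w^2\nabla v+2vw\,\nabla w$, I obtain the pointwise identity
\[
|\nabla\phi|^2=v^2|\nabla w|^2+\nabla v\cdot\nabla(vw^2).
\]
The function $vw^2=\phi^2/v$ lies in $H_0^1(\Om)\cap L^\infty(\Om)$ -- boundedness follows from $\phi\in C(\Omb)$ and $v$ being bounded away from zero on $\Omb$ -- so the integration by parts
\[
\int_\Om \nabla v\cdot\nabla(vw^2)=-\int_\Om (\De v)\,vw^2
\]
is legitimate, despite $\De v$ only belonging to $L^q$ for some $q>N/2$. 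Applying the hypothesis $-\De v+av\ge 0$ and integrating the pointwise identity then yields
\[
\int_\Om |\nabla\phi|^2+a\phi^2=\int_\Om v^2|\nabla w|^2+\int_\Om (-\De v+av)\,vw^2\ge \int_\Om v^2|\nabla w|^2.
\]

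\textbf{Conclusion and main obstacle.} On the other hand, testing the eigenvalue equation $-\De\phi+a\phi=\la_1^\Om(a)\phi$ against $\phi$ itself (which is legal as $\phi\in H_0^1\cap L^\infty$ and the equation holds a.e.) gives $\int_\Om|\nabla\phi|^2+a\phi^2=\la_1^\Om(a)\int_\Om\phi^2$. Since $w\in H_0^1(\Om)$ is nontrivial, it cannot be a constant function -- the zero trace would otherwise force $w\equiv 0$ -- so $\nabla w\not\equiv 0$; together with $v>0$, this forces $\int_\Om v^2|\nabla w|^2>0$, hence $\la_1^\Om(a)\int_\Om\phi^2>0$ and therefore $\la_1^\Om(a)>0$. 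The delicate step in the argument is the integration by parts above: the usual formulations demand either $v\in C^2$ or a test function with more than $H_0^1$ regularity in order to pair with $\De v\in L^q$. The observation that unlocks it here is that the specific test function $\phi^2/v$ is bounded on $\Omb$, so the pairing with $\De v\in L^q\subset L^1$ is unambiguous and standard approximation (truncation plus mollification of $v$) produces the identity.
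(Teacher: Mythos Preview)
Your argument is correct and constitutes a genuinely different proof from the one in the paper. The paper argues by contradiction: assuming $\la_1^\Om(a)\le 0$, it sets $\gamma=\inf_{\Om} v/\phi>0$, observes that $z=v-\gamma\phi\ge 0$ satisfies $-\De z+a(x)z\ge 0$ and touches zero at some interior point, and then invokes a strong maximum principle for $W^{2,q}$ functions with potentials in $L^q$, $q>N/2$ (citing Orsina--Ponce and Trudinger), to force $z\equiv 0$, contradicting the boundary values. Your route is instead the Picone/Barta identity: with $w=\phi/v$ you derive $|\nabla\phi|^2=v^2|\nabla w|^2+\nabla v\cdot\nabla(vw^2)$, integrate by parts, and use the supersolution inequality pointwise to obtain $\la_1^\Om(a)\int_\Om\phi^2\ge\int_\Om v^2|\nabla w|^2>0$.

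The trade-off is this. Your approach is more elementary in that it completely bypasses the strong maximum principle for singular potentials, which is the nontrivial external input in the paper's proof; all you need is the integration by parts $\int_\Om\nabla v\cdot\nabla\psi=-\int_\Om(\De v)\psi$ for $v\in W^{2,q}(\Om)$ and $\psi=\phi^2/v\in H_0^1(\Om)\cap L^\infty(\Om)$, which follows by approximating $\psi$ in $H_0^1$ by $C_0^\infty$ functions and using $q>N/2\ge 2N/(N+2)$ so that $L^{2^*}\hookrightarrow L^{q'}$. The paper's touching argument, on the other hand, is closer in spirit to the moving-planes machinery used elsewhere in the article and does not require the eigenfunction $\phi$ to be bounded (though here it is).
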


\begin{proof}
The proof is based on standard arguments, the only important point being the  use of a strong 
maximum principle for $W^{2,q}$ functions with $q> \frac{N}{2}$, available thanks to  
Theorem 1 in \cite{OP} or Corollary 5.1 in \cite{T}. 

Assume for a contradiction that $\la_1^\Om (a)\le 0$, and let $\phi \in H_0^1 (\Om)\cap 
W^{2,q}(\Om)\cap C(\Omb)$ be a positive eigenfunction given by Lemma \ref{autovalores}. Since 
$v\in C(\Omb)$ is positive, we have
$$
\gamma= \inf_{x\in \Om} \frac{v(x)}{\phi(x)}>0.
$$
Consider the function $z= v-\gamma \phi$. It is clear that $z\in W^{2,q}(\Om)\cap C(\Omb)$, while 
$z\ge 0$ in $\Om$. By continuity, and since $v>0$ on $\p \Om$ while $\phi=0$ there, there exists 
$x_0\in \Om$ such that $z(x_0)=0$. Moreover, 
$$
-\De z + a(x) z \ge -\la_1^\Om (a) \gamma \phi \ge 0 \quad \hbox{in } \Om.
$$
We may use the strong maximum principle to conclude 
that $z\equiv 0$ in $\Om$, which is not possible because $v>0$ on $\p\Om$ and $\phi$ vanishes on 
$\p\Om$.  Therefore $\la_1^\Om (a)>0$, as we wanted to show.
\end{proof}

\bigskip

To conclude the section, we consider the Liouville theorem for stable solutions of \eqref{problema}. 
It is worthy of mention that the nonexistence result in \cite{DDG} (Theorem 1.2 there) is more general, but we 
restrict ourselves to the subcritical range of the parameter, which allows us to give a simpler proof.

\begin{teorema}\label{th-liouville-est}
Assume $N\ge 3$, $\al>-2$ and 
\begin{equation}\label{cond-subcr}
1<p\le \frac{N+2\al+2}{N-2}.
\end{equation}
Then the unique stable solution of \eqref{problema} is $u\equiv 0$.
\end{teorema}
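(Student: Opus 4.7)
My plan is to prove Theorem \ref{th-liouville-est} by Farina's integration-by-parts method, adapted to the H\'enon weight $|x|^\al$. Assume for contradiction that $u > 0$ is a stable solution. The idea is to use the stability inequality \eqref{cond-estab} with test function $\phi = u\eta$, and separately multiply the equation \eqref{problema} by $u\eta^2$ and integrate by parts (both operations being justified by Lemma \ref{regularidad}). Subtracting the two identities, the mixed term $\int u\eta\,\nabla u\cdot\nabla\eta$ cancels, yielding the clean bound
\begin{equation*}
(p-1)\int|x|^\al u^{p+1}\eta^2 \;\le\; \int u^2|\nabla\eta|^2.
\end{equation*}
Throughout I denote $p_c := (N+2\al+2)/(N-2)$ and $J(R) := \int_{B_R}|x|^\al u^{p+1}$.

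Next I would take $\eta = \eta_R$ a standard radial cutoff with $\eta_R \equiv 1$ on $B_R$, supported in $B_{2R}$, and $|\nabla\eta_R|\le C/R$. The key observation is that $|\nabla\eta_R|$ is supported on the annulus $B_{2R}\setminus B_R$, which avoids the origin and so sidesteps any integrability difficulty coming from $|x|^\al$. The above inequality then reads $(p-1)J(R) \le (C/R^2)\int_{B_{2R}\setminus B_R}u^2$. I would next apply H\"older's inequality on the annulus with conjugate exponents $(p+1)/2$ and $(p+1)/(p-1)$, via the factorization $u^2 = (|x|^\al u^{p+1})^{2/(p+1)}\cdot|x|^{-2\al/(p+1)}$; computing the weight integral as $\int_{B_{2R}\setminus B_R}|x|^{-2\al/(p-1)}\le C R^{N-2\al/(p-1)}$ (this estimate holds for any sign of $\al$, since the annulus stays away from $0$) and simplifying the resulting exponent of $R$, I arrive at
\begin{equation*}
J(R) \;\le\; C\,\bigl(J(2R)-J(R)\bigr)^{2/(p+1)}\, R^{(N-2)(p-p_c)/(p+1)}.
\end{equation*}

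In the strictly subcritical range $p < p_c$ the exponent of $R$ is negative. A standard iteration of the weaker estimate $J(R) \le C\,J(2R)^{2/(p+1)}\,R^{\theta}$ (with $\theta<0$) first shows that $J$ is uniformly bounded in $R$; then the monotonicity of $J$ forces $J(2R)-J(R)\to 0$ as $R\to\infty$, and plugging this into the displayed bound gives $J(R)\to 0$, whence $J\equiv 0$ and $u\equiv 0$. In the critical case $p = p_c$ the exponent of $R$ vanishes, and this is where I expect the main obstacle to lie: the iteration no longer produces a spare decay factor, so uniform boundedness of $J$ must be obtained by a more delicate bootstrap, possibly combined with a preliminary local $L^r$-bound on $u$ via Brezis--Kato or Moser iteration to rule out fast growth. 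Once uniform boundedness of $J$ is established, the same annular absolute-continuity argument as in the subcritical case still produces $u\equiv 0$.
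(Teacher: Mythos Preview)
Your overall strategy---combine the stability inequality with the equation tested against $u\eta^2$ to obtain $(p-1)\int|x|^\al u^{p+1}\eta^2\le\int u^2|\nabla\eta|^2$, then choose a radial cutoff and apply H\"older on the annulus---is exactly the paper's approach. The exponent $(N-2)(p-p_c)/(p+1)$ you compute is also correct.

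There is, however, a genuine gap in your subcritical step. The claim that a ``standard iteration'' of
\[
J(R)\le C\,J(2R)^{2/(p+1)}R^{\theta},\qquad \theta<0,
\]
forces $J$ to be bounded is false in general: for any $p>1$ one can check that $J(R)=e^{R^a}$ satisfies this inequality for all large $R$ once $a>\log_2\!\big((p+1)/2\big)$, yet is unbounded. The reason the argument does not close is that with a plain cutoff $\eta_R$ the annular integral $\int_{B_{2R}\setminus B_R}|x|^\al u^{p+1}$ appearing on the right is \emph{not} dominated by the weighted integral $\int|x|^\al u^{p+1}\eta_R^2$ on the left (since $\eta_R<1$ on the annulus), so you cannot absorb the right-hand side into the left.

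The paper fixes this with one extra idea: take $\varphi(x)=\xi(x/R)^{(p+1)/(p-1)}$ instead of the plain cutoff. Then $|\nabla\varphi|^2\le CR^{-2}\varphi^{4/(p+1)}$, and after H\"older with exponents $\tfrac{p+1}{2},\tfrac{p+1}{p-1}$ the \emph{same} quantity $\int_{B_{2R}}|x|^\al\varphi^2 u^{p+1}$ appears on both sides, yielding directly
\[
\int_{B_R}|x|^\al u^{p+1}\le\int_{B_{2R}}|x|^\al\varphi^2 u^{p+1}\le C\,R^{\frac{N(p-1)-2(p+1)-2\al}{p-1}},
\]
with no iteration needed. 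This also dissolves the difficulty you anticipated in the critical case: when $p=p_c$ the exponent is zero, so $|x|^\al u^{p+1}\in L^1(\R^N)$ immediately; then the annular factor in the sharper estimate tends to $0$ as $R\to\infty$ by dominated convergence, and $u\equiv0$ follows. No Brezis--Kato or Moser bootstrap is required.
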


\begin{proof}
Let $\varphi\in C^\infty_0(\R^N)$ be arbitrary. Taking $\varphi^2 u$ as a test function in 
\eqref{problema} and setting $\phi=\varphi u$ in \eqref{cond-estab}, we obtain
\begin{equation}\label{eq-estab-1}
(p-1) \int |x|^\al \varphi^2 u^{p+1} \le \int u^2 |\nabla \varphi|^2.
\end{equation}
Now choose $\xi\in C_0^\infty(B_2)$ such that $0\le \xi\le 1$ and $\xi\equiv 1$ in $B_1$. 
Take $R>0$ and choose
$$
\phi(x)= \xi\left(\frac{x}{R}\right)^\frac{p+1}{p-1}
$$
in \eqref{eq-estab-1}. It is easily seen that this implies
$$
\int_{B_{2R}} |x|^\al \varphi^2 u^{p+1} \le \frac{C}{R^2} \int_{B_{2R}\setminus B_R} \varphi^\frac{4}{p+1}u^2,
$$
for some $C>0$ (from now on, we are using the letter $C$ to denote different constants not depending on $R$). 
Using H\"older's inequality in the last integral with conjugate exponents 
$\frac{p+1}{2}$ and $\frac{p+1}{p-1}$ yields
\begin{equation}\label{eq-estab-3}
\begin{array}{rl}
\ds \int_{B_{2R}} |x|^\al \varphi^2 u^{p+1} \hspace{-2mm} & \ds \le C R^{-2+N\frac{p-1}{p+1}} 
\left(\int_{B_{2R}\setminus B_R} \varphi^2 u^{p+1} \right)^\frac{2}{p+1}\\[.5pc]
& \ds \le  C R^{-2-\frac{2\al}{p+1}+N\frac{p-1}{p+1}} 
\left(\int_{B_{2R}\setminus B_R} |x|^\al \varphi^2 u^{p+1} \right)^\frac{2}{p+1}.
\end{array}
\end{equation}
As a consequence we arrive at 
\begin{equation}\label{eq-estab-2}
\int_{B_{R}} |x|^\al u^{p+1} \le \int_{B_{2R}} |x|^\al \varphi^2 u^{p+1} \le 
C R^\frac{N(p-1)-2(p+1)-2\al}{p-1}.
\end{equation}
It is not hard to check that, when the second inequality in \eqref{cond-subcr} is strict, the exponent of $R$ 
in \eqref{eq-estab-2} is negative. Hence letting $R\to +\infty$ we see that $u\equiv 0$. 

In the case where $p=\frac{N+2\al+2}{N-2}$, inequality \eqref{eq-estab-2} gives $|x|^\al u^{p+1}\in L^1(\R^N)$. 
Thus letting $R\to +\infty$ in \eqref{eq-estab-3} we also obtain $u\equiv 0$ in $\R^N$. The proof 
is concluded. 
\end{proof}

\bigskip

\section{Proof of the main results}
\setcounter{section}{3}
\setcounter{equation}{0}

This section is devoted to prove Theorems \ref{th-liouville} and \ref{th-clasif}. As we have 
mentioned in the Introduction, the fundamental step in both theorems is to obtain a monotonicity 
property of the solutions. It is worthy of mention that in this case, the restriction 
\begin{equation}\label{rest-exp}
p>\frac{N+\al}{N-2} 
\end{equation}
is important, and does not imply any loss in generality. 
The idea for the proof of the next result comes from \cite{BM}. Recall our definition \eqref{def-beta}.

\begin{teorema}\label{monotonia}
Assume $\al>-2$ and $p$ verifies \eqref{rest-exp}. Let $u\in H^1_{\rm loc}(\R^N)\cap L^\infty_{\rm loc}(\R^N)$
be a positive weak solution of \eqref{problema}. Then:

\begin{itemize}

\item[(a)] When \eqref{subcritico}  holds, the function $|x|^\beta u(x)$ is nondecreasing in $|x|$.

\smallskip
\item[(b)] If \eqref{critico} holds, either $|x|^\be u(x)$ is nondecreasing in $|x|$ or 
there exists $\mu>0$ such that 
\begin{equation}\label{kelvin}
u(x)= \mu ^{N-2}|x|^{2-N} u\left( \mu^2 \frac{ x}{|x|^2}\right), \quad x\in \R^N \setminus \{0\},
\end{equation}
that is, $u$ coincides with its Kelvin transform with respect to some ball centered at the origin.
\end{itemize}
\end{teorema}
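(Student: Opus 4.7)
The plan is to reformulate the problem, via the Emden--Fowler transformation, as a moving-plane argument on the cylinder $\R\times S^{N-1}$. Set $t=\log|x|$, $\theta=x/|x|$ and $v(t,\theta)=e^{\be t}u(e^t\theta)$; the normalization $\be(p-1)=2+\al$ is chosen precisely so that $v$ satisfies the autonomous equation
\begin{equation*}
v_{tt}+(N-2-2\be)\,v_t+\De_\theta v+\be(\be+2-N)\,v+v^p=0
\end{equation*}
on $\R\times S^{N-1}$. The hypothesis (\ref{rest-exp}) is equivalent to $\be<N-2$, while (\ref{subcritico})--(\ref{critico}) is equivalent to the drift coefficient $D:=N-2-2\be$ being nonpositive (and strictly negative in the strict subcritical case). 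Monotonicity of $|x|^\be u$ in $|x|$ amounts precisely to $v_t\geq 0$ on the cylinder, and by Lemma \ref{lema-solucion} the function $v_t=e^{\be t}(\na u\cdot x+\be u)$ extends continuously to $x=0$; moreover $v\to 0$ uniformly in $\theta$ as $t\to-\infty$, by Lemma \ref{regularidad} and the positivity of $\be$.

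I would then apply the moving-plane method in the $t$-direction, which under the change of variables $\mu=e^\la$ is equivalent to a moving-sphere comparison of $u$ with its Kelvin transforms $u_\mu(x)=(\mu/|x|)^{N-2}u(\mu^2 x/|x|^2)$. For $\la\in\R$, set $v_\la(t,\theta)=v(2\la-t,\theta)$ and $w_\la=v-v_\la$, working in $\Sigma_\la=\{t>\la\}$. Since reflection $t\mapsto 2\la-t$ flips the sign of the drift, $v_\la$ satisfies the analogous equation with drift $-D(v_\la)_t$. Subtracting and linearising $v^p-v_\la^p=c_\la(t,\theta)\,w_\la$ (with $|c_\la|\leq p\max(v,v_\la)^{p-1}$, which is only in $L^q_{\mathrm{loc}}$ with $q>N/2$ when $\al<0$, precisely the regularity handled by Lemmas \ref{autovalores}--\ref{autov-positivo}) yields a difference equation in which the drift contribution appears as $D\bigl[v_t(t,\theta)-v_t(2\la-t,\theta)\bigr]$. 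The decisive structural fact is that $D\leq 0$: combined with the narrow-strip maximum principle via Lemma \ref{autov-positivo}, this lets the comparison $w_\la\geq 0$ propagate through narrow strips $\{\la<t<\la+\e\}$, where the first-order contribution becomes subordinate to the principal-eigenvalue gap.

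I would then run the standard moving-plane procedure. Using the decay of $v$ at $t=-\infty$ together with a growth/asymptotic bound at $t=+\infty$ coming from the equation and elliptic regularity, and the narrow-strip principle, one verifies $w_\la\geq 0$ in $\Sigma_\la$ for all $\la$ sufficiently large. Set
\[
\la^*=\inf\{\la\in\R:\ w_\mu\geq 0\text{ in }\Sigma_\mu\text{ for all }\mu\geq\la\}.
\]
If $\la^*=-\infty$, then $v_t\geq 0$ on the whole cylinder, giving part (a). Otherwise, $\la^*>-\infty$, and by continuity $w_{\la^*}\geq 0$ in $\Sigma_{\la^*}$; the strong maximum principle then forces $w_{\la^*}\equiv 0$, i.e., $v(t,\theta)=v(2\la^*-t,\theta)$. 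In the strict subcritical case, writing the equation at $(t,\theta)$ and at $(2\la^*-t,\theta)$ and subtracting, the symmetry gives $2Dv_t\equiv 0$; since $D\neq 0$, this forces $v_t\equiv 0$, so $v$ is independent of $t$, and $v\to 0$ at $t=-\infty$ forces $v\equiv 0$, contradicting $u>0$. Hence $\la^*=-\infty$ and (a) holds. In the critical case $D=0$, the symmetry alternative is genuine, and translated back via $r=e^t$, $\mu=e^{\la^*}$ it reads $u(x)=\mu^{N-2}|x|^{2-N}u(\mu^2 x/|x|^2)$, which is exactly (\ref{kelvin}), yielding (b).

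The main obstacle throughout is the drift term $Dv_t$: it is antisymmetric under the reflection $t\mapsto 2\la-t$, so the standard moving-plane comparison does not produce a clean linear inequality for $w_\la$, but rather one involving the nonlocal combination $v_t(t,\theta)-v_t(2\la-t,\theta)$. The whole argument rests on the fact that in the range (\ref{subcritico})--(\ref{critico}) the sign $D\leq 0$ turns this potential obstruction into a contribution compatible with the narrow-strip maximum principle from Lemma \ref{autov-positivo}; outside this range the moving-plane procedure cannot even be initiated, which explains the sharpness of the exponent threshold.
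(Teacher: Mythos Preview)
Your Emden--Fowler set-up and the identification of the drift sign $D\le 0$ (equivalently $a:=2\be-(N-2)\ge 0$) as the structural key are exactly right, and the dichotomy ``monotone versus symmetric'' is the correct endgame. However, there is a genuine gap in the initiation step: you move the plane from $\la\to+\infty$ downwards, working in $\Sigma_\la=\{t>\la\}$, and justify the start by appealing to ``a growth/asymptotic bound at $t=+\infty$''. No such bound is available---nothing in the hypotheses controls $u$ (hence $v$) as $|x|\to\infty$, so you cannot apply any maximum principle on the unbounded set $\{t>\la\}$ for $\la$ large, nor can you localise a hypothetical negative infimum of $w_\la$.

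The paper runs the moving plane in the opposite direction, starting from $\la\ll -1$ on $\Sigma_\la=\{t<\la\}$. This is where all the a~priori information lives: boundedness of $u$ at the origin gives $w\to 0$ uniformly as $t\to-\infty$, and Lemma~\ref{lema-solucion} (specifically \eqref{lim-grad-cero}) gives $w_t>0$ for $t\le \bar t$. The drift is not absorbed by a narrow-strip argument; instead one rewrites the difference equation so that the entire drift effect sits on the right-hand side as $-2a\,w_t$, which is $\le 0$ on $\{t\le\bar t\}$ by the sign of $w_t$. Combined with $-b+p\,w^{p-1}\le -b/2$ for $t\le\bar t$, a direct maximum-point argument (no eigenvalue lemma needed) rules out a negative interior minimum of $z^\la$ and starts the process. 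The continuation step then traps any would-be negative point in the compact slab $[\bar t,\la_0]\times S^{N-1}$, again avoiding any appeal to behaviour at $+\infty$. Note also that Lemmas~\ref{autovalores}--\ref{autov-positivo} play no role in this theorem; they are used only later, in the proof of Theorem~\ref{th-liouville}, to pass from the sign of $\nabla u\cdot x+\be u$ to stability.
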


\begin{proof}
To start with, we consider polar coordinates, and write $u(x)=v(r,\theta)$, where $r=|x|$ and 
$\theta =\frac{x}{|x|}\in S^{N-1}$. It is not hard to see that $v$ verifies
$$
v_{rr} + \frac{N-1}{r} v_r + \frac{1}{r^2} \De_\theta v + r^\al v^p =0 \quad  \hbox{in }(0,+\infty)\times S^{N-1},
$$
in the classical sense, 
where $\De_\theta$ stands for the Laplace-Beltrami operator on $S^{N-1}$. We now introduce the
Emden-Fowler transformation,
$$
w(t,\theta)= r^\beta v(r,\theta) \quad \hbox{where } t= \log r.
$$
After a straightforward calculation we see that $w$ verifies
$$
w_{tt} +aw_t + \De_\theta w- b w + w^p=0 \quad  \hbox{in } \R \times S^{N-1},
$$
where
\begin{equation}\label{def-coef}
\begin{array}{l}
a= 2\be- (N-2) \ge 0\\[0.25pc]
b= \be (N-2-\be)>0.
\end{array}
\end{equation}
Our proof reduces to show that $w$ is nondecreasing in the variable $t$ in $\R$.
For this sake we employ the moving planes method (cf. \cite{GNN}, \cite{BN}). We follow the standard notation:
$$
\begin{array}{l}
\Sigma_\la=(-\infty,\la)\times S^{N-1}\\[0.25pc]
T_\la =\{\la\} \times S^{N-1}\\[0.25pc]
t^\la=2\la-t \quad \hbox{if } t<\la,
\end{array}
$$
and let 
$$
z^\la (t,\theta) = w(t^\la,\theta)- w(t,\theta), \quad (t,\theta) \in \Sigma_\la.
$$
By the mean value theorem, there exists $\xi_\la=\xi_\la(t,\theta)$ such that 
$w(t^\la,\theta)^p- w(t,\theta)^p= p \xi_\la^{p-1} z^\la$. 
Then the function $z^\la$ verifies the equation
\begin{equation}\label{eq-simetria}
z^\la_{tt} - a z^\la_t +\De_\theta z^\la - b z^\la + p\xi_\la^{p-1}z^\la = -2a w_t \quad \hbox{in }\Sigma_\la.
\end{equation}
We next observe that the boundedness of $u$ at $x=0$ implies 
\begin{equation}\label{lim-cero}
\lim_{t\to -\infty} \inf_{\theta\in S^{N-1}} w(t,\theta)=0 \qquad \hbox{ and } \qquad
\lim_{t\to -\infty} \inf_{\theta\in S^{N-1}} z^\la(t,\theta) \ge 0.
\end{equation}
Moreover,
$$
w_t= r^\be (\be u + r u_r),
$$
so that from \eqref{lim-grad-cero} in Lemma \ref{lema-solucion} we see that there exists $\bar t\ll -1$ 
such that $w_t>0$ in $(-\infty,\bar t)\times S^{N-1}$. Diminishing $\bar t$ if necessary, we can also 
achieve by \eqref{lim-cero} that
$$
-b+ p w^{p-1}\le -\frac{b}{2}<0 \quad \hbox{for } t\le \bar t.
$$

\medskip
\noindent \underline{Claim}: $z^\la \ge 0$ in $\Sigma_\la$ when $\la\le \bar t$. 

\smallskip
Indeed, assume on the contrary that 
$$
\inf_{\Sigma_\la} z^\la <0
$$
for some $\la \le \bar t$. 
By \eqref{lim-cero} and since $z^\la=0$ on $T_\la$, we deduce the existence of a point 
$(t_0,\theta_0)\in \Sigma_\la$ such that the infimum of $z^\la$ is achieved. Thus
$z^\la_t(t_0,\theta_0)=0$, $z^\la_{tt}(t_0,\theta_0)\ge 0$, $\Delta_\theta z^\la (t_0,\theta_0)\ge 0$ and 
$$
-b+p\xi_\la(t_0,\theta_0)^{p-1} \le -b+p w(t_0,\theta_0)^{p-1} \le - \frac{b}{2}
$$ 
by our choice of $\bar t$. Then, using \eqref{eq-simetria} and recalling \eqref{def-coef}:
$$
(-b+   p\xi_\la(t_0,\theta_0)^{p-1} ) z^\la (t_0,\theta_0) \le -2a w_t(t_0,\theta_0)\le 0,
$$
a contradiction. This shows the claim. 

\medskip

Next define
$$
\la_0= \sup \{\la \in \R:\ z^\mu\ge 0  \hbox{ in } \Sigma_\mu,  \hbox{ for every }\mu \in (-\infty,\la)\}.
$$
Two situations are possible:

\smallskip
\begin{itemize}
\item[(i)] $\la_0=+\infty$;

\medskip
\item[(ii)] $\la_0<+\infty$.
\end{itemize}

\smallskip
\noindent In case (i), we would obtain that $z^\la\ge 0$ in $\Sigma_\la$ for every $\la\in \R$, which implies 
that $w$ is nondecreasing in the $t$ variable.

\smallskip

Therefore we only have to deal with case (ii). By the strong maximum principle we deduce that 
either  $z^{\la_0}\equiv 0$ in $\Sigma_{\la_0}$ or $z^{\la_0}>0$ in $\Sigma_{\la_0}$ with $z_t<0$ on 
$\{ \la_0 \} \times S^{N-1}$. 

The second situation can be easily discarded. By the definition of $\la_0$, there exist sequences 
$\la_n\to \la_0+$, $(t_n,\theta_n)\in \Sigma_{\la_n}$ such that
$$
z^{\la_n} (t_n,\theta_n)<0.
$$
We claim that we can always assume $t_n\ge \bar t$. Otherwise, we would have $z^{\la_n}\ge 0$ 
in $\Sigma_{\la_n}\setminus \Sigma_{\bar t}$. Reasoning as in the claim above this would yield $z^{\la_n}\ge 0$ 
in $\Sigma_{\la_n}$, a contradiction. 

Therefore $t_n\in [\bar t,\la_n]$. Passing to subsequences we may assume $t_n\to t^* \in [\bar t, \la_0]$, 
$\theta_n\to \theta^* \in S^{N-1}$. Then $z^{\la_0}(t^*,\theta^*)=0$. Let us see that this is impossible. 

If $t^*<\la_0$ we have an immediate contradiction with $z^{\la_0}>0$ in $\Sigma_{\la_0}$. 
When $t^*=\la_0$, we can select points $s_n\in (t_n,\la_n)$ such that $z_t(s_n,\theta_n)\ge 0$. 
Passing to the limit this would yield $z_t(\la_0, \theta^*)\ge 0$, which is also a contradiction.

\medskip

To summarize, we have shown that when $\la_0<+\infty$ we always have $z^{\la_0}\equiv 0$ in $\Sigma_{\la_0}$, 
that is, $w$ is symmetric with respect to $\la_0$. 
When \eqref{subcritico} holds we have that the coefficient $a$ in \eqref{def-coef} is strictly positive. The symmetry 
of $w$ would imply from \eqref{eq-simetria} that $w_t=0$ in $\R\times S^{N-1}$, which is equivalent to 
$w=w_0(\theta)$ for some positive function $w_0$. Then $u(x) = w_0(\frac{x}{|x|}) |x|^{-\beta}$, which contradicts the boundedness of $u$ near $x=0$. This shows that, with the subcriticality assumption \eqref{subcritico} 
we always have $\la_0=+\infty$, and (a) is proved.

When $p$ verifies \eqref{critico}, both $\la_0=+\infty$ and $\la_0<+\infty$ are possible. In the second 
case, the function $w$ is 
symmetric with respect to $\la_0$. Setting $\mu^2 = e^{2\la_0}$ and rewriting the symmetry property of $w$ in 
terms of the original function $u$ we obtain \eqref{kelvin}. This concludes the proof of (b).
\end{proof}
 
\bigskip

After all these preliminaries, we are in a position to prove our two main results, Theorems \ref{th-liouville} 
and \ref{th-clasif}.

\bigskip

\begin{proof}[Proof of Theorem \ref{th-liouville}]
Because of the already mentioned results in \cite{AS} and \cite{MP}, 
we may assume that $p$ verifies \eqref{rest-exp}. 
We claim that $u$ is stable in $\R^N$. To see this, we first apply Theorem \ref{monotonia} to 
obtain that $|x|^\be u(x)$ is nondecreasing as a function of $|x|$. Since 
$u$ is smooth in $\R^N\setminus \{0\}$ by Lemma \ref{regularidad}, this implies that 
$$
v=\nabla u(x) \cdot x + \be u(x) \ge 0 \quad \hbox{in } \R^N\setminus \{0\}.
$$
Moreover, from Lemma \ref{lema-solucion} we see that $v\in W^{2,q}_{\rm loc}(\R^N)$ is a 
solution of the linearized equation
$$
-\De v = p |x|^{\al} u^{p-1} v \quad \hbox{in } \R^N\setminus \{0\}.
$$
The strong maximum principle gives either $v>0$ in $\R^N$ or $v\equiv 0$ in $\R^N$. 
However, this last option may not occur, since it would imply that $u$ is homogeneous of 
degree $\beta$, contradicting its boundedness at zero.

Therefore $v>0$ in $\R^N$. By Lemma \ref{autov-positivo}, this implies $\la_1 ^\Om(-p |x|^\al u^{p-1})>0$, 
for every smooth bounded domain $\Om\subset \R^N$. 
Using the variational characterization \eqref{variacional}, we arrive at 
$$
\int_\Om |\nabla \phi|^2 - p |x|^\al u^{p-1} \phi^2 \ge \la_1^\Om (-p |x|^\al u^{p-1})
\int_\Om \phi^2 \ge 0,
$$
for every $\phi \in C^\infty_0(\Om)$. Since $\Om$ is arbitrary, the stability of $u$ 
is established. To conclude the proof, we use Theorem \ref{th-liouville-est}, which implies 
that $u\equiv 0$, a contradiction.
\end{proof}

\bigskip

\begin{proof}[Proof of Theorem \ref{th-clasif}]
By Theorem \ref{monotonia} and the proof of Theorem \ref{th-liouville}, we see that 
\eqref{kelvin} holds for some $\mu>0$. Observe that this implies 
\begin{equation}\label{comportamiento}
\lim_{x\to +\infty} |x|^{N-2} u(x)=\mu^{N-2} u(0).
\end{equation}
Since $-2<\al<0$, we may use the moving plane method directly on $u$ to obtain 
that $u$ is radially symmetric. Then the conclusion follows from Appendix A in \cite{GS}.

An alternative proof is to observe that \eqref{comportamiento} implies 
$$
\int_{\R^N} |x|^\al u(x)^{p+1}dx <+\infty,
$$
Theorem 1.2 in \cite{DEL} implies that $u$ is of the form \eqref{bubble} for some 
$\mu>0$.
\end{proof}

\bigskip

\noindent {\bf Acknowledgements.} Supported by Spanish Ministerio de Econom\'ia y 
Competitividad under grant MTM2014-52822-P. The author would like to thank professor 
Philippe Souplet for some useful comments on a previous version of this work.


\end{document}